\documentclass[12pt]{amsart}

\usepackage{amssymb,enumitem,amsaddr}
\makeatletter
\@namedef{subjclassname@2020}{%
  \textup{2020} Mathematics Subject Classification}
\makeatother

\newtheorem{theorem}{Theorem}[section]

\newtheorem{lemma}[theorem]{Lemma}

\numberwithin{subcase}{case}
\theoremstyle{definition}
\newtheorem{definition}[theorem]{Definition}

\numberwithin{equation}{section}

\begin{document}
\baselineskip=17pt
\title[On Maximal Prime Gaps]{On Maximal Prime Gaps}

\author[Ch.T. Wang]{Cheng-TIng Wang}\footnote{Independent Researcher}\footnote{Present Address: 2F., No. 382, Daye Rd., Beitou Dist., Taipei City 112029, Taiwan}
\email{ssssasasaaasrhs.triangle@gmail.com}

\date{}

\begin{abstract}
In this paper, we show that the gap between a prime number $p_n$ and its consecutive prime $p_{n+1}$ is on the order of $\log^2{p_n}$; consequently, we show the existence of a prime in certain types of intervals.
\end{abstract}

\subjclass[2020]{Primary 11A41, 11N05}

\keywords{Prime gaps}

\maketitle

\section{Introduction}

The distribution of prime numbers is a major area of interest in number theory; several conjectures have been proposed for this topic. An early conjecture about the maximal size of prime gaps was made by Bertrand(1845)\cite{bertrand1845}, in which he conjectured that given a positive integer $x$, there is at least one prime number between $x$ and $2x$, and his conjecture has been known as the \textit{Bertrand's Postulate}. Bertrand's conjecture was later proven by Chebyshev(1852) \cite{chebyshev1852} as a corollary of his proof that the number of primes not larger than $x$, denoted by $\pi(x)$, is on the order of $\frac{x}{\log{x}}$. In his proof, Chebyshev introduced the \textit{Chebyshev functions} $\varphi(x)$ and $\psi(x)$, which are the sum of the logarithms of all prime numbers not larger than $x$ and the logarithms of all powers of primes not larger than $x$, respectively. Later, Erd\H{o}s gave an elementary proof of Bertrand's Postulate in 1932,\cite{erdos1932}. The technique used by Erd\H{o}s in his proof is elementary and refined versions of the technique have been used by few others to give a simple proof of the existence of a prime number in intervals of the type $[kn, (k+1)n]$ where $k$ is a fixed positive inter much smaller than $n$, like the the existence of a prime number in the interval $[2n, 3n]$(Bachraoui, 2006\cite{Bachraoui2006}) and in the integer $[3n, 4n]$(Loo, 2011\cite{AndyLoo2011}) for all positive integers $n$.

Some other improvements on the existence of primes in the intervals of the type $[kn, (k+1)n]$ are as follows:
\begin{itemize}
    \item By using estimations about the Chebyshev Function $\vartheta(x)$, Nagura(1952) proved that for all $25\le n$, there's a prime number in the interval $[n, \frac{6}{5}n]$\cite{Nagura1952}
    \item Schoenfeld(1976) showed that for all $2010760\le n$, there's a prime number in the interval $(n, (1+\frac{1}{16597})n)$\cite{Schoenfeld1976}
    \item In his doctoral thesis, Dusart(1998) showed that for all positive integers $x\ge 3275$, there is a prime $p$ such that $x \le p < x(1+\frac{1}{2\log^2{x}})$.\cite{dusart1998}
\end{itemize}

Besides, Cram\'{e}r(1936) has shown that by assuming the Riemann Hypothesis, the gap between a prime $p$ and the next prime is $O(\sqrt{p}\log{p})$.\cite{cramer1936}. Later, Dudek(2014) found an explicit interval for Cram\'{e}r's result by showing that if the Riemann Hypothesis is true, then for all $x \ge 2$, there exists a prime number $p$ such that $x-\frac{4}{\pi}\sqrt{x}\log{x} < p \le x$\cite{dudek2014}.

Another type of result concerns the existence of prime numbers in intervals of the form $[x, x+x^{\theta}]$ for sufficiently large $x$ and for a real number $\theta < 1$. Hocheisel(1930) first proved this result. He showed that given a real number $\theta < 1$, then $\pi(x+x^{\theta})-\pi(x)\sim\frac{x^{\theta}}{\log{x}}$ as $n\to\infty$, where $\pi(x)$ is the number of prime numbers not greater than $x$, which implies that for sufficiently large positive integer $n$, $p_{n+1}-p_n < p_n^{\theta}$. Hocheisel also showed that one may take $\theta=\frac{29999}{30000}$.\cite{Hocheisel1930} Some other results in this direction are as follows:

\begin{itemize}
    \item Heilbronn(1933) showed that showed that one may take $\theta=\frac{249}{250}$.\cite{Heilbronn1933berDP}
    \item Chudakov(1936) showed that showed that for any $0 < \epsilon$ one may take $\theta=\frac{3}{4}+\epsilon$.\cite{Chudakov1936}
    \item Ingham(1937) showed that if $\zeta(s)$ is the Riemann zeta function, then if $\zeta(\frac{1}{2}+it)=O(t^c)$ for some real numbers $t$ and $c$, then $\frac{1+4c}{2+4c} < \theta$. Ingham also showed unconditionally that one may take $\frac{5}{8} < \theta$, which implies that for sufficiently large $x$, there is at least one prime number between $x^3$ and $(x+1)^3$\cite{ingham1937} and Dudek(2016) found an explicit $x$ for Ingham's result that $x \ge e^{e^{33.3}}$\cite{dudek2016}
    \item Huxley(1971) showed one may take $\frac{7}{12} < \theta$.\cite{Huxley1971}
    \item Baker, Harman and Pintz(2001) have shown that for all sufficiently large $x$, there is at least one prime number between $x-x^{0.525}$ and $x$ by using a sieve method.\cite{BHP2001}
\end{itemize}
To the author's knowledge, the result of Baker et. al. is the best unconditional result in prime gaps so far.

On the other hand, conjectures stronger than the result implied by assuming the Riemann Hypothesis have been proposed. Below are some of such conjectures:
\begin{itemize}
    \item Legendre conjectured that for every positive integer $x$, there is at least one prime number between $x^2$ and $(x+1)^2$.\cite{legender1808}
    \item Oppermann(1877) made a conjecture slightly stronger than that of Legendre that for every positive integer $x$, there is at least one prime number between $x(x-1)$ and $x^2$, and a prime between $x^2$ and $x(x+1)$.\cite{oppermann1877}
    \item Brocard conjectured that if $p_n$ is the $n$th prime with $n\ge2$, than there are at least four primes between $p_n^2$ and $p_{n+1}^2$\cite{brocard1904}
    \item Andrica(1986) conjectured that if $p_n$ is the $n$th prime, then $\sqrt{p_{n+1}}-\sqrt{p_n} < 1$\cite{andrica1986}
\end{itemize}
Since these conjectures are beyond the result attainable by assuming the Riemann Hypothesis, they are widely considered as results deeper than the Riemann Hypothesis, and none of these stronger conjectures have been proven so far.

Some even stronger conjectures are as follows:
\begin{itemize}
    \item Cram\'{e}r(1936) conjectured that $\limsup_{n \to \infty} \frac{p_{n+1}-p_n}{\log^2{p_n}}=1$.(\cite{cramer1936})
    \item Firoozbakht(1982) conjectured that if $p_n$ is the $n$th prime, then $p_{n+1}^n < p_n^{n+1}$\cite{rivera}
\end{itemize}
It has been shown that the conjecture made by Firoozbakht implies that $g_n < \log^2{p_n}-\log{p_n}$ for all $5\le n$(\cite{sinha2010}) and $g_n < \log^2{p_n}-\log{p_n}-1$ for all $10\le n$(\cite{Kourbatov2015}), indicating that Firoozbakht conjecture is stronger than Cram\'{e}r's conjecture; however, while computational data like those done by Nicely(1999) suggest that $g_n < \log^2{p_n}$,\cite{Nicely1999}, giving evidence for Cram\'{e}r's conjecture that $\limsup_{n \to \infty} \frac{p_{n+1}-p_n}{\log^2{p_n}}=1$, a paper by Granville(1995) have casted doubts on the upper limit for prime gaps suggested by Cram\'{e}r's conjecture and Firoozbakht conjecture have (\cite{granville1995}). In Granville's paper, it has been pointed out that Cram\'{e}r's conjecture is inconsistent with Maier's theorem and he instead suggested that $\limsup_{n \to \infty} \frac{p_{n+1}-p_n}{\log^2{p_n}}=c$ for some $c \ge \frac{2}{e^{\gamma}}$, where $\gamma=0.5772\ldots$ is the Euler-Mascheroni constant.

In this paper, we show that the gap $g_n$ between a prime number $p_n$ and a consecutive prime $p_{n+1}$ is not larger than $\frac{13}{3}\log^2{p_n}$, and we also show the implications of the gap on the existence of a prime number in certain types of intervals. Unless otherwise specified, $p_n$ indicates the nth prime number, $g_n=p_{n+1}-p_n$ indicates the prime gap between the nth prime and its consecutive prime, $\log{y}$ indicates the natural logarithm of $y$, $\log^c{y}$ indicates $(\log{y})^c$, $\log_k{y}$ indicates $\log{\log_{k-1}{y}}$ with $\log_1{y}=\log{y}$.

\section{Main Result}
\begin{definition}$A_k=\frac{1}{k}\sum_{i=1}^{k}g_k=\frac{p_{k+1}-2}{k}$ where $1\le k$ is a positive integer.\end{definition}
\begin{theorem}
We have the following:
\begin{itemize}
    \item[1.] $g_n=\frac{g_n-A_{n-1}}{n}=\frac{g_n-A_n}{n-1}$ for all positive integers $2\le n$.
    \item[2.] $\log{n} < A_n < \frac{n+1}{n}\log{p_n}$ for all $21 \le n$.
    \item[3.] $3 < A_n$ for all $22\le n$.
    \item[4.] $A_n-A_{n-1} < \frac{3}{4}$ for all $22\le n$.
\end{itemize}
\end{theorem}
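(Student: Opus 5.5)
Item~1 cannot hold as printed, since it would force $A_{n-1}=(1-n)g_n$. I will read it as the evident intended identity $A_n-A_{n-1}=\frac{g_n-A_{n-1}}{n}=\frac{g_n-A_n}{n-1}$. I also read the sum in the Definition as $\sum_{i=1}^k g_i$; it telescopes to $p_{k+1}-p_1=p_{k+1}-2$, as the Definition states.

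For item~1 the plan is pure algebra. From the telescoped form, $nA_n=(n-1)A_{n-1}+g_n$. Rearranging gives $n(A_n-A_{n-1})=g_n-A_{n-1}$. Writing the same relation as $(n-1)(A_n-A_{n-1})=g_n-A_n$ gives the second expression. This identity is the tool for item~4.

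For item~2 I would use explicit Rosser--Schoenfeld/Dusart bounds on the $n$th prime, valid for $n\ge 6$:
\[
n(\log n+\log_2 n-1)<p_n<n(\log n+\log_2 n).
\]
The lower bound $A_n>\log n$ means $p_{n+1}>n\log n+2$. This follows from $p_{n+1}>(n+1)\log(n+1)$, because $\log(n+1)\ge 2$ once $n\ge 7$. The upper bound $A_n<\frac{n+1}{n}\log p_n$ means $p_{n+1}<(n+1)\log p_n+2$. I would bound the left side by $(n+1)\bigl(\log(n+1)+\log_2(n+1)\bigr)$. On the right, $\log p_n>\log n+\log(\log n+\log_2 n-1)$. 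It then suffices to show
\[
\log(\log n+\log_2 n-1)-\log_2(n+1)\ \ge\ \log\bigl(1+\tfrac1n\bigr)-\tfrac{2}{n+1}.
\]
The left side is $\log\bigl(1+\frac{\log_2 n-1}{\log n}\bigr)+o(1)$, which is positive once $\log_2 n>1$, i.e.\ $n\ge 16$. The right side is $O(1/n^2)$. So the inequality holds for all $n$ beyond a modest explicit $N_0$. The cases $21\le n<N_0$ would be checked directly from a prime table.

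Item~3 follows at once from item~2: for $n\ge 21$ we have $\log n\ge\log 21>3$, since $e^3\approx 20.09$. The range $n\ge 22$ in the statement is then covered.

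For item~4, combine item~1 with item~3 (applied at $n-1\ge 21$) to get $A_n-A_{n-1}=\frac{g_n-A_{n-1}}{n}<\frac{g_n-3}{n}$. So it suffices to prove $g_n<\frac34 n+3$. For $p_n\ge 3275$ I would use Dusart's interval result quoted in the introduction, which gives $g_n<\frac{p_n}{2\log^2 p_n}$. Combined with $p_n<n(\log n+\log_2 n)$ and $\log p_n>\log n$, this yields $g_n<n\,\frac{\log n+\log_2 n}{2\log^2 n}$, which is far below $\frac34 n$. The remaining range $22\le n\le\pi(3275)$ is a finite check of gaps against $\frac34n+3$. It is trivially satisfied, since gaps there are at most a few dozen while $\frac34 n\ge 16$.

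The main obstacle is item~2's upper bound. The required inequality has only an $O(1/n)$ margin at the level of $\log p_{n+1}$ versus $\log p_n$. So the explicit constants must be tracked carefully, and the threshold $N_0$ after which the asymptotic comparison is rigorous must be pinned down. I would first try to make $N_0$ small enough, around a few hundred, that the finite verification is short, using the sharper Dusart lower bound $p_n\ge n(\log n+\log_2 n-1)$.
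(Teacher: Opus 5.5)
Your proposal is correct and essentially the paper's argument. Item 1 comes from $g_n=nA_n-(n-1)A_{n-1}$, and your corrected reading is the one the paper itself uses later. Items 2 and 3 come from the explicit bounds $n\log n<p_n$ and $n(\log n+\log_2 n-1)<p_n<n(\log n+\log_2 n)$. Item 4 uses item 1 to reduce to bounding $g_n$ by roughly $\frac34 n$, via Dusart's interval for $p_n>3275$ plus a finite check below that; there the paper quotes the verified bound $g_n<\log^2 p_n-\log p_n$ instead of a prime table. Two minor remarks: the right-hand side $\log(1+\frac1n)-\frac{2}{n+1}$ in your item 2 reduction is negative, so that inequality holds outright for all $n\ge 21$ and the thin margin you worried about does not arise; and Dusart's result must be applied at some $x$ just above $p_n$ (e.g.\ $x=p_n+1$), since at $x=p_n$ it is vacuous, which changes nothing in the conclusion.
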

\begin{proof}
By the definition of $n$, we have $g_n=nA_n-(n-1)A_{n-1}$, and 1. immediately follows.

Also, we have $n\log{n} < p_n$ for all $1\le n$\cite{Rosser1939} and $n(\log{n}+\log_2{n}-1) < p_n < n(\log{n}+\log_2{n})$ for all $6\le n$\cite{rosser1941}, and since $n(\log_2{n}-1)-2 < 0$ implies $n\le 20$, 2. holds for all $21\le n$.

Now assume that $A_n\le 3$, then we have $p_{n+1} \le 3n+2$, which and the fact that $n\log{n} < p_n$ for all $6\le n$ implies that $n\le 21$, therefore, for all $22\le n$, $3 < A_n$, thus 3. is proven.

Also, if $\frac{3}{4} \le A_n-A_{n-1}$, then by 1., we have $\frac{3}{4}n < \frac{3}{4}n+A_{n-1} < g_n$

Since $p_n < \log^2{p_n}-\log{p_n}$ for all $5\le n$ and $p_n < 4\times 10^{18}$ and $p_{n+1} < p_n(1+\frac{1}{2\log^2{p_n}})$ for all $3275 < p_n$, which implies that $p_{n+1} < p_n(1+\frac{1}{2\log^2{p_n}}) < p_n+\frac{n}{2\log{p_n}}$ for all $463\le n$.

Therefore, for all $5\le n\le 462$, we have $\frac{3}{4}n < g_n < \log^2{p_n}-\log{p_n}$. By using the inequalities $n\log{n} < p_n < n(\log{n}+\log_2{n})$ for all $6\le n$, this implies that $n \le 21$; also for $463\le n$, we have $\frac{3}{4}n < g_n < \frac{n}{2\log{p_n}}$, which implies that $p_n < e^{\frac{2}{3}} < 2$, a contradiction. Therefore for all $22\le n$, $A_n-A_{n-1} < \frac{3}{4}$ thus 4. is proven.
\end{proof}
\begin{lemma}\label{lem1}
Assume that $1 < t$ is a real number, and assume that $n$ is the smallest positive integer $k$ such that $tA_{k-1}^2+A_{k-1} \le g_k$, then $\frac{A_{n-1}-A_{n-2}}{A_n-A_{n-1}}-1 < 0$.
\end{lemma}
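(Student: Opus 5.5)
The plan is to turn both differences in the ratio into expressions in $g_{n-1}$, $g_n$ and the averages, using the identity behind item 1 of the Theorem, $g_k=kA_k-(k-1)A_{k-1}$. Rearranged, it reads
\[
A_k-A_{k-1}=\frac{g_k-A_{k-1}}{k}.
\]
Throughout I write $A=A_{n-2}$ and $d=g_{n-1}-A_{n-2}$, so that $A_{n-1}-A_{n-2}=d/(n-1)$ and $A_{n-1}=A+d/(n-1)$. I assume $n\ge 3$, so that $A_{n-2}$ is defined; any smaller case would be checked by hand.

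\emph{Step 1 (sign of the denominator and a lower bound for it).} By the defining property of $n$, $g_n\ge tA_{n-1}^2+A_{n-1}$. Hence
\[
A_n-A_{n-1}=\frac{g_n-A_{n-1}}{n}\ge \frac{tA_{n-1}^2}{n}>\frac{A_{n-1}^2}{n}>0,
\]
using $t>1$. So the denominator is positive, and it suffices to show $A_{n-1}-A_{n-2}<A_n-A_{n-1}$. If $d\le 0$ the numerator is nonpositive, the ratio is $\le 0$, and $\frac{A_{n-1}-A_{n-2}}{A_n-A_{n-1}}-1<0$ follows at once. So from now on assume $d>0$, so that $A_{n-1}>A$.

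\emph{Step 2 (case $d\ge A/2$).} Expanding,
\[
(n-1)A_{n-1}^2=(n-1)A^2+2Ad+\frac{d^2}{n-1},
\]
and $2Ad\ge A^2$ gives $(n-1)A_{n-1}^2\ge nA^2$, that is, $A^2/(n-1)\le A_{n-1}^2/n$. By minimality of $n$, the index $k=n-1$ fails the condition, so $g_{n-1}<tA^2+A$, i.e. $d<tA^2$. Therefore
\[
A_{n-1}-A_{n-2}=\frac{d}{n-1}<\frac{tA^2}{n-1}\le \frac{tA_{n-1}^2}{n}\le A_n-A_{n-1}.
\]

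\emph{Step 3 (case $0<d<A/2$).} Here $A_{n-1}-A_{n-2}<A/(2(n-1))$. Also $A_n-A_{n-1}>A_{n-1}^2/n>A^2/n$, since $A_{n-1}>A$. So it is enough that $A/(2(n-1))\le A^2/n$, i.e. $A\ge n/(2(n-1))$. This holds because $A_k\ge 1$ for every $k$ (the gaps satisfy $g_1=1$ and $g_i\ge 2$ for $i\ge 2$), while $n/(2(n-1))\le 1$ for $n\ge 2$.

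The main obstacle is exactly this comparison of $A_{n-2}^2/(n-1)$ with $A_{n-1}^2/n$. Minimality only bounds the numerator in terms of $A_{n-2}$, whereas the denominator is controlled by $A_{n-1}$, and the two are not directly comparable. The case split on the size of $d$ relative to $A_{n-2}$ resolves this. The only remaining things to verify are the trivial lower bound $A_k\ge 1$ and the small-$n$ edge cases.
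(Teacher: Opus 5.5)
Your proof is correct, but it is organized differently from the paper's.

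The paper argues by contradiction. Assuming the ratio is at least $1$, it uses the two forms $A_k-A_{k-1}=\frac{g_k-A_{k-1}}{k}=\frac{g_k-A_k}{k-1}$ to turn $A_{n-1}-A_{n-2}\ge A_n-A_{n-1}$ into $g_n\le g_{n-1}+A_n-A_{n-2}$. It then inserts the hypothesis at $n$, expands $A_{n-1}=A_{n-2}+x$ with $x=\frac{g_{n-1}-A_{n-2}}{n-1}$, and splits on the sign of $tx^2+2tA_{n-2}x-(A_n-A_{n-1})$. If this quantity is nonnegative, the defining inequality already holds at $k=n-1$, contradicting minimality. If it is negative, the estimate $\sqrt{a^2+b}-a\le\frac{b}{2a}$ gives $x<\frac{A_n-A_{n-1}}{2tA_{n-2}}$; combined with $A_n-A_{n-1}\le x$, this forces $A_{n-2}<\frac{1}{2t}$.

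You instead prove the strict inequality directly. You bound the denominator below by $\frac{tA_{n-1}^2}{n}$ and the numerator above by $\frac{tA_{n-2}^2}{n-1}$ via minimality. Your split at $d=A_{n-2}/2$ is exactly what makes the comparison $\frac{A_{n-2}^2}{n-1}\le\frac{A_{n-1}^2}{n}$ available. Your small-$d$ branch plays the role of the paper's negative-quadratic branch. It needs $A_{n-2}\ge\frac{n}{2(n-1)}$ where the paper needs $A_{n-2}\ge\frac{1}{2t}$, and both follow at once from $A_k\ge 1$.

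So the ingredients coincide: the hypothesis at $n$, minimality at $n-1$, and a trivial lower bound on $A_{n-2}$. Your version avoids the quadratic formula and the square-root estimate. It also states explicitly that $A_n-A_{n-1}>0$, which the paper uses silently both to pass from the ratio to the difference and to divide at the end.

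Your caveat $n\ge 3$ is in fact automatic. Index $k=2$ would require $tA_1^2+A_1=t+1\le g_2=2$, which is impossible for $t>1$, so no small cases need checking by hand.
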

\begin{proof}
Assume that $0\le \frac{A_{n-1}-A_{n-2}}{A_n-A_{n-1}}-1$, then we have $0 \le 2A_{n-1}-A_n-A_{n-2}=\frac{g_{n-1}-A_{n-2}}{n-1}-\frac{g_n-A_n}{n-1}$, which implies that
\begin{equation}\label{lem1-eq1}
tA_{n-1}^2+A_{n-1} < g_n \le g_{n-1}+A_n-A_{n-2}.
\end{equation}

Now since $A_{n-1}=A_{n-2}+\frac{g_{n-1}-A_{n-2}}{n-1}$, we have $t(A_{n-2}+\frac{g_{n-1}-A_{n-2}}{n-1})^2+A_{n-1} < g_{n-1}+A_n-A_{n-2}$, which implies that
\begin{equation}\label{lem1-eq2}
tA_{n-2}^2+A_{n-2}+tx^2+(2tA_{n-2})x-(A_n-A_{n-1}) < g_{n-1}
\end{equation}
where $x=\frac{g_{n-1}-A_{n-2}}{n-1}$.

Now if $tx^2+(2tA_{n-2})x-(A_n-A_{n-1}) < 0$, then we have
\begin{equation}\label{lem1-eq3}
x=\frac{g_{n-1}-A_{n-2}}{n-1} < \frac{\sqrt{(2tA_{n-2})^2+4t(A_n-A_{n-1})}-(2tA_{n-2})}{2t}.
\end{equation}
By Bernoulli's inequality, \eqref{lem1-eq3} implies that $\frac{g_{n-1}-A_{n-2}}{n-1} < \frac{A_n-A_{n-1}}{2tA_{n-2}}$.

Now since $A_{n-1}-A_{n-2}=\frac{g_{n-1}-A_{n-2}}{n-1}$ and $0 \le 2A_{n-1}-A_n-A_{n-2}$, we have
\begin{equation}\label{lem1-eq4}
A_n-A_{n-1} \le A_{n-1}-A_{n-2}=\frac{g_{n-1}-A_{n-2}}{n-1} < \frac{A_n-A_{n-1}}{2tA_{n-2}}.
\end{equation}
But \eqref{lem1-eq4} implies that $2tA_{n-2} < 1$, which indicates that $A_{n-2} < \frac{1}{2t}$, a contradiction. Thus we have $0 \le tx^2+(2tA_{n-2})x-(A_n-A_{n-1})$. But if $0 \le tx^2+(2tA_{n-2})x-(A_n-A_{n-1})$, then \eqref{lem1-eq2} implies that $tA_{n-2}^2+A_{n-2} < g_{n-1}$, contradicting with the fact that $n$ is the smallest positive integer such that $tA_{k-1}^2+A_{k-1} \le g_k$.

Therefore, if $1 < t$ is a real number and $n$ is the smallest positive integer $k$ such that $tA_{k-1}^2+A_{k-1} \le g_k$, then $\frac{A_{n-1}-A_{n-2}}{A_n-A_{n-1}}-1 < 0$.
\end{proof}
\begin{theorem}\label{thm1}
For all positive integers $n$, $g_n < tA_{n-1}^2+A_{n-1}$ with $t=\frac{15}{8}$.
\end{theorem}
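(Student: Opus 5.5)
The plan is a minimal-counterexample argument driven by Lemma~\ref{lem1}. Fix $t=\frac{15}{8}$ and suppose the statement fails. Let $n$ be the smallest positive integer $k$ with $tA_{k-1}^2+A_{k-1}\le g_k$.

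\emph{Step 1: the range $p_n<4\times10^{18}$.} For small $n$ I would check directly, by computation, that no such $k$ exists. For all $p_n<4\times 10^{18}$ the cited computations give $g_n<\log^2 p_n-\log p_n$. Part 2 of the preceding Theorem gives $A_{n-1}>\log(n-1)$. The Rosser--Schoenfeld bound $p_n<n(\log n+\log_2 n)$ gives $\log p_n<\log n+\log(\log n+\log_2 n)$. From these, $tA_{n-1}^2+A_{n-1}$ exceeds $\log^2 p_n$ once $n$ passes a small explicit threshold, because $t=\frac{15}{8}>1$ absorbs the $\log_2$ correction. Below that threshold one verifies by hand, which also covers $n\le 22$, where parts 2--4 of the preceding Theorem do not yet apply. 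So the minimal counterexample has $p_n\ge 4\times10^{18}$ and $n\ge 23$.

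\emph{Step 2: the large range.} Apply Lemma~\ref{lem1} to the minimal $n$. It gives $A_{n-1}-A_{n-2}<A_n-A_{n-1}$. By part 1 of the preceding Theorem (read as $A_n-A_{n-1}=\frac{g_n-A_n}{n-1}=\frac{g_n-A_{n-1}}{n}$), this becomes
\[
\frac{g_{n-1}-A_{n-2}}{n-1}<\frac{g_n-A_{n-1}}{n}.
\]
By minimality, $g_{n-1}<tA_{n-2}^2+A_{n-2}$. I would try to show that this local inequality is incompatible with the jump $g_n\ge tA_{n-1}^2+A_{n-1}$. The idea is to iterate the argument of Lemma~\ref{lem1} backwards along $n-1,n-2,\dots$, forcing the increments $A_{k}-A_{k-1}$ to be increasing over a long stretch. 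Summing, $A_n-A_m$ would then have to be large. On the other hand part 2 pins $A_n$ between $\log n$ and $\frac{n+1}{n}\log p_n$, and part 4 caps each increment by $\frac34$.

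\emph{Main obstacle.} I expect Step 2 to be the real difficulty, and I am not confident it can be closed. The hypothesis $g_n\ge tA_{n-1}^2+A_{n-1}$ only yields $A_n-A_{n-1}\ge tA_{n-1}^2/n\approx t\log^2 n/n$, which is tiny. This is perfectly consistent with part 4 and with the monotonicity of increments from Lemma~\ref{lem1}. A single large gap barely moves the running average $A_k$, so none of the available averaged information contradicts it.

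Worse, the statement would give $g_n=O(\log^2 p_n)$, a Cram\'er-type bound. That lies far beyond what is known even under the Riemann Hypothesis, where only $O(\sqrt{p_n}\log p_n)$ is available. So any argument that uses only the elementary inputs above (Rosser--Schoenfeld bounds, Dusart's interval, and the averaged quantities $A_k$) must contain a gap. Before trusting the plan, I would first test whether Lemma~\ref{lem1} is even correctly proved. Its first display replaces $\le$ by $<$, and part 1 of the preceding Theorem is misstated. If those points fail, the strategy collapses at Step 2.
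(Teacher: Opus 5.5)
Your proposal does not prove the statement. Step 1 only recovers it in the computationally verified range $p_n<4\times10^{18}$, and that part is fine. Step 2, which is the entire content of the theorem, is left open.

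Your diagnosis of why it cannot be closed is correct. At a minimal counterexample the hypothesis only forces $A_n-A_{n-1}=\frac{g_n-A_{n-1}}{n}\ge\frac{tA_{n-1}^2}{n}$. Nothing in parts 2--4, Rosser--Schoenfeld or Dusart excludes an increment that small. The statement also amounts to $g_n\le(\frac{15}{8}+o(1))\log^2p_n$, which is an open Cram\'er-type bound, so no lemma built from these inputs can close the gap. The backward iteration you sketch is not available either: Lemma~\ref{lem1} requires its index to be the minimal counterexample, and $n-1$ is not one.

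One correction: your suspicion of Lemma~\ref{lem1} is misplaced. Part 1 of the preceding theorem is merely misprinted, and your reading is the intended one. The lemma itself is true. Write $x=A_{n-1}-A_{n-2}$ and $\delta=A_n-A_{n-1}>0$. If $x\ge\delta$, then $g_n-A_n=(n-1)\delta\le(n-1)x=g_{n-1}-A_{n-2}$. Combined with $g_n\ge tA_{n-1}^2+A_{n-1}$, this gives $g_{n-1}\ge tA_{n-2}^2+A_{n-2}+2tA_{n-2}x+tx^2-\delta>tA_{n-2}^2+A_{n-2}$, contradicting minimality. The lemma's real defect is that it only yields $\varphi:=\delta/x>1$ (for $x>0$), which is the useless direction.

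The paper uses exactly this skeleton and claims to close Step 2 by splitting on the sign of $g_{n-1}-A_{n-2}$. Both cases fail, so the paper does not contain the idea you are missing.

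\textbf{Case 1.} The paper asserts that Lemma~\ref{lem1} gives $\varphi-1<0$, which is the reverse of what the lemma says, and derives $\varphi<1+\frac{1}{A_{n-1}^2-1}$ from it. Separately, the factor $t$ in the bound $1+\frac{t(A_n-A_{n-1})A_{n-1}}{\beta}$ is lost in the next line. Only because of this loss does the chain end in $t<\frac{A_{n-2}}{A_{n-1}}\bigl(\varphi+(A_n-A_{n-1})\bigr)$. With $t$ kept, it ends in $tA_{n-1}(A_{n-1}-\varphi A_{n-2})<A_n-A_{n-1}$, which is vacuous whenever $\varphi A_{n-2}\ge A_{n-1}$.

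\textbf{Case 2.} The estimate $\frac{A_n-A_{n-1}}{A_{n-2}-A_{n-1}}<\frac72$ comes from a sign slip: $A_{n-1}^2+A_{n-2}^2$ appears where $A_{n-2}^2-A_{n-1}^2$ is meant. That ratio equals $\frac{g_n-A_n}{A_{n-2}-g_{n-1}}$. At $n=99$ ($g_{98}=2$, $g_{99}=18$) it is about $3.75$, even though every hypothesis used in that step holds there. At a genuine counterexample it would exceed roughly $tA_{n-1}$. So the $\frac72$ bound is the conclusion in disguise.
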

\begin{proof}
The case $n\le 21$ can be checked manually. Now assume that $22\le n$.

Assume that $n$ is the smallest positive integer $k$ such that $tA_{k-1}^2+A_{k-1} \le g_k$ and $t=\frac{15}{8}$, then for all positive integers $k\le n-1$, we have $g_k < tA_{k-1}^2+A_{k-1}$. Now we should prove our theorem by cases:

\textbf{Case 1: }$A_{n-2}\le g_{n-1}$.

\textit{Proof of Case 1.}
If $A_{n-2}\le g_{n-1}$, then we have $0 \le A_{n-1}-A_{n-2}$ and $\frac{A_{n-1}-A_{n-2}}{A_n-A_{n-1}}(g_n-A_n)=(n-1)(A_{n-1}-A_{n-2}) < tA_{n-2}^2$, which implies that $0 < t\alpha A_{n-2}^2+\beta A_{n-2}-A_{n-1}\beta$, with $\alpha=A_n-A_{n-1}$ and $\beta=tA_{n-1}^2-(A_n-A_{n-1})$.

By solving the inequality $0 < t\alpha A_{n-2}^2+\beta A_{n-2}-A_{n-1}\beta$, we have $A_{n-2} < \frac{-\sqrt{\beta^2+4t\alpha A_{n-1}\beta}-\beta}{2t\alpha}$ or $\frac{\sqrt{\beta^2+4t\alpha A_{n-1}\beta}-\beta}{2t\alpha} < A_{n-2}$.

If $A_{n-2} < \frac{-\sqrt{\beta^2+4t\alpha A_{n-1}\beta}-\beta}{2t\alpha}$, then since , we have $0 < \alpha$,  which implies that $A_{n-2} < 0$, a contradiction. Therefore, we have $\frac{\sqrt{\beta^2+4t\alpha A_{n-1}\beta}-\beta}{2t\alpha} < A_{n-2}$.

Now if $\frac{\sqrt{\beta^2+4t\alpha A_{n-1}\beta}-\beta}{2t\alpha} < A_{n-2}$, then we have
\begin{equation}\label{thm1-case1-eq1}
\begin{aligned}
&\frac{2A_{n-1}\beta}{\sqrt{\beta^2+4t\alpha A_{n-1}\beta}+\beta} = \frac{\sqrt{\beta^2+4t\alpha A_{n-1}\beta}-\beta}{2t\alpha} < A_{n-2}\\
&\implies \frac{2\beta}{\sqrt{\beta^2+4t\alpha A_{n-1}\beta}+\beta} < \frac{A_{n-2}}{A_{n-1}}.\\
\end{aligned}
\end{equation}

By the definitions that $A_n=\frac{p_{n+1}-2}{n}$ and $p_{n+1}=p_n+g_n$, \eqref{thm1-case1-eq1} implies that
\begin{equation}\label{thm1-case1-eq2}
\begin{aligned}
&\implies \frac{p_{n-1}+g_{n-1}-2}{p_{n-1}-2}\frac{n-2}{n-1} < \frac{\sqrt{\beta^2+4t\alpha A_{n-1}\beta}+\beta}{2\beta} \le 1+\frac{t(A_n-A_{n-1})A_{n-1}}{\beta}\\
&\implies (\frac{g_{n-1}}{p_{n-1}-2}) < \frac{(g_{n-1}-A_{n-2})\varphi A_{n-1}+\beta}{(n-2)\beta}\\
&\implies \frac{\beta}{A_{n-2}A_{n-1}}=\frac{(n-2)(tA_{n-1}-\frac{A_n}{A_{n-1}}-1)}{p_{n-1}-2} < (1-\frac{A_{n-2}}{g_{n-1}})\varphi +\frac{\beta}{g_{n-1}A_{n-1}}\\
&\implies\frac{\beta}{A_{n-1}A_{n-2}} < \frac{g_{n-1}}{g_{n-1}-A_{n-2}} (1-\frac{A_{n-2}}{g_{n-1}})\varphi=\varphi\\
&\implies t < \frac{A_{n-2}}{A_{n-1}}\varphi+A_{n-2}\frac{A_n-A_{n-1}}{A_{n-1}}\\
\end{aligned}
\end{equation}
where $\varphi=(\frac{A_n-A_{n-1}}{A_{n-1}-A_{n-2}})$.

We have $\frac{A_n-A_{n-1}}{A_{n-1}-A_{n-2}}=\varphi < \frac{A_{n-1}}{A_{n-1}-\frac{1}{A_{n-1}}}=1+\frac{1}{A_{n-1}^2-1}$. This is because if $\frac{A_{n-1}}{A_{n-1}-\frac{1}{A_{n-1}}}\le\varphi$, then we have $\frac{1}{\varphi} =\frac{A_{n-1}-A_{n-2}}{A_n-A_{n-1}} \le \frac{A_{n-1}-\frac{1}{A_{n-1}}}{A_{n-1}}$, which implies that $\frac{1}{\varphi}\le 1-\frac{1}{A_{n-1}^2}$ and we have $\frac{1}{A_{n-1}^2}\le\frac{\varphi-1}{\varphi}$, but by Lemma \ref{lem1}, we have $\varphi-1 < 0$, which implies that $\frac{1}{A_{n-1}^2} < 0$, a contradiction. Therefore, we have $\frac{A_n-A_{n-1}}{A_{n-1}-A_{n-2}}=\varphi < \frac{A_{n-1}}{A_{n-1}-\frac{1}{A_{n-1}}}=1+\frac{1}{A_{n-1}^2-1}$.

Thus \eqref{thm1-case1-eq2} implies that
\begin{equation}
t < \frac{A_{n-2}}{A_{n-1}}(\varphi+(A_n-A_{n-1})) < 1+\frac{1}{A_{n-1}^2-1}+(A_n-A_{n-1}) < 1+\frac{1}{8}+\frac{3}{4} < \frac{15}{8}.
\end{equation}
But this contradicts $t=\frac{15}{8}$. Therefore, for all positive integers $n$, $g_n < 3A_{n-1}^2+A_{n-1}$ for Case 1.

\textbf{Case 2: }$g_{n-1} < A_{n-2}$.

\textit{Proof of Case 2.}
If $g_{n-1} < A_{n-2}$, then we have $A_{n-1}-A_{n-2} < 0$, which and the fact that $1 < A_{n-2}, A_{n-1}, A_n$ and that $A_{n-2} < A_n$ imply that $tA_{n-1}^2 < \frac{A_n-A_{n-1}}{A_{n-2}-A_{n-1}}(A_{n-2}-g_{n-1})$.

We have $\frac{A_n-A_{n-1}}{A_{n-2}-A_{n-1}} < \frac{A_{n-1}+A_{n-2}}{A_n}+\frac{A_n}{A_{n-2}} < \frac{7}{2}$. This is because if $\frac{A_{n-1}+A_{n-2}}{A_n}+\frac{A_n}{A_{n-2}} \le \frac{A_n-A_{n-1}}{A_{n-2}-A_{n-1}}$, then we have $\frac{A_{n-1}^2+A_{n-2}^2}{A_n}-(\frac{A_n}{A_{n-2}}-1)A_{n-1} \le 0$, but since $\frac{A_n}{A_{n-2}}-1$ is much smaller than $1$, $\frac{A_{n-1}^2+A_{n-2}^2}{A_n}-(\frac{A_n}{A_{n-2}}-1)A_{n-1} \le 0$ leads to a contradiciton, therefore we have $\frac{A_n-A_{n-1}}{A_{n-2}-A_{n-1}} < \frac{A_{n-1}+A_{n-2}}{A_n}+\frac{A_n}{A_{n-2}}$; on the other hand, if $\frac{5}{2} \le \frac{A_{n-2}}{A_n}+\frac{A_n}{A_{n-2}}$, then we have either $2 \le \frac{A_n}{A_{n-2}}$ or $\frac{A_n}{A_{n-2}} \le \frac{1}{2}$, but both leads to a contradiction, thus we have $\frac{A_{n-2}}{A_n}+\frac{A_n}{A_{n-2}} < \frac{5}{2}$, which implies that $\frac{A_n-A_{n-1}}{A_{n-2}-A_{n-1}} < \frac{A_{n-1}+A_{n-2}}{A_n}+\frac{A_n}{A_{n-2}} < \frac{7}{2}$.

Now since $\frac{A_n-A_{n-1}}{A_{n-2}-A_{n-1}} < \frac{7}{2}$, we have $tA_{n-1}^2 < \frac{A_n-A_{n-1}}{A_{n-2}-A_{n-1}}(A_{n-2}-g_{n-1}) < \frac{7}{2}(A_{n-2}-g_{n-1})$.

We have $A_{n-2}-A_{n-1} < \frac{A_{n-1}}{20}$, this is because if $\frac{1}{20}A_{n-1} \le A_{n-2}-A_{n-1}$, then we have $21(p_n-2) \le 20\frac{n-1}{n-2}(p_{n-1}-2)$, which implies that $p_n-2+g_n \le \frac{20}{n-2}(p_{n-1}-2)\le p_{n-1}-2$ and thus we have $g_n < 0$, a contradiction.

However, if $A_{n-2}-A_{n-1} < \frac{A_{n-1}}{20}$, then we have $tA_{n-1}^2 < \frac{7}{2}(A_{n-2}-g_{n-1}) < \frac{7}{2}A_{n-2} < \frac{7}{2}\frac{21}{20}A_{n-1}$, which indicates that $A_{n-1} < \frac{147}{40t} = \frac{49}{25}$, but this implies that $n \le 7$, contradicting with the assumption that $22\le n$.

Therefore, for all positive integers $n$, $g_n < \frac{15}{8}A_{n-1}^2+A_{n-1}$ for Case 2.

Since for both Case 1 and Case 2, the proposition that for all positive integers $n$, $g_n < \frac{15}{8}A_{n-1}^2+A_{n-1}$ holds. Therefore, for all positive integers $n$, $g_n < \frac{15}{8}A_{n-1}^2+A_{n-1}$. And this completes the proof.
\end{proof}
\begin{theorem}\label{thm2}
For all positive integers $n$, $g_n < \frac{5}{2}\log^2{p_n}$.
\end{theorem}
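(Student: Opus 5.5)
The plan is to derive Theorem \ref{thm2} directly from Theorem \ref{thm1}. I will use the upper bound for $A_n$ in part 2 of the first theorem of this section to turn the bound in $A_{n-1}$ into a bound in $\log p_n$. No new arithmetic input about primes should be needed beyond what is already quoted there.

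\textbf{Step 1 (reduction to $A_{n-1}$).} By Theorem \ref{thm1}, $g_n < \frac{15}{8}A_{n-1}^2 + A_{n-1}$ for every $n$. So it suffices to show
\[
\frac{15}{8}A_{n-1}^2 + A_{n-1} \le \frac{5}{2}\log^2 p_n .
\]

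\textbf{Step 2 (bounding $A_{n-1}$).} For $n-1 \ge 21$, part 2 of the first theorem gives
\[
A_{n-1} < \frac{n}{n-1}\log p_{n-1} < \frac{n}{n-1}\log p_n .
\]
Write $L = \log p_n$ and $c_n = \frac{n}{n-1}$. The target then reduces to
\[
\frac{15}{8}c_n^2 + \frac{c_n}{L} \le \frac{5}{2}.
\]
Both $c_n$ and $1/L$ decrease in $n$, so it suffices to check this at the smallest admissible $n$, namely $n = 22$. There $c_{22} = \frac{22}{21}$, so $\frac{15}{8}c_{22}^2 \approx 2.058$. Also $L = \log 79 \approx 4.37$, so $c_{22}/L \approx 0.24$. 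The total is about $2.30 < 2.5$, and the inequality holds with room to spare for all $n \ge 22$.

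\textbf{Step 3 (small cases).} For $n \le 21$ the claim $g_n < \frac{5}{2}\log^2 p_n$ is checked directly from the list of primes up to $p_{22} = 79$. For $n=1$ we have $g_1 = 1$ and $\frac{5}{2}\log^2 2 \approx 1.2$, which is the tightest case. The largest gap in this range is $g = 6$ at $p = 23, 31, 47, 53, 61, 73$. Each of these satisfies $\frac{5}{2}\log^2 p \ge \frac{5}{2}\log^2 23 \approx 24.6$.

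The only delicate point is the dependence on Theorem \ref{thm1}. Once Theorem \ref{thm1} is taken as given, the derivation above is a short numerical verification, so the main thing to double-check is the constant: the margin between $\frac{15}{8}c_n^2$ and $\frac{5}{2}$ must absorb the linear term $c_n/L$ at the first value $n = 22$. I would verify this explicitly rather than rely on asymptotics.
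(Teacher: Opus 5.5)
Your reduction is correct and follows the paper's own route. You bound $A_{n-1}$ by $\frac{n}{n-1}\log p_n$ (via part 2 of the first theorem, where the paper gets it directly from $p_n<n(\log n+\log_2 n)$ and $n\log n<p_n$), substitute into $g_n<\frac{15}{8}A_{n-1}^2+A_{n-1}$, check the constant at $n=22$, and verify $n\le 21$ by hand. The only difference is how the linear term is absorbed. You use $c_n/\log p_n\le \frac{22/21}{\log 79}\approx 0.24$, giving a total of about $2.30$. The paper instead uses $A_{n-1}>3$ to reach $\frac{53}{24}\bigl(\frac{22}{21}\bigr)^2\approx 2.42$. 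Both are below $\frac52$, and your small-case data, including $g_1=1<\frac52\log^2 2\approx 1.20$, are right.

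The genuine gap is the point you flagged yourself. Everything rests on $g_n<\frac{15}{8}A_{n-1}^2+A_{n-1}$, and the paper's proof of that bound does not hold. So what you (and the paper) have shown is only an implication, not the statement.

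\textbf{Case 1.} Lemma~\ref{lem1} states $\frac{1}{\varphi}<1$ for $\varphi=\frac{A_n-A_{n-1}}{A_{n-1}-A_{n-2}}$. Since $\varphi>0$ there, this means $\varphi>1$. The paper instead uses it as $\varphi-1<0$ to obtain $\varphi<1+\frac{1}{A_{n-1}^2-1}$. This reversed inequality is exactly what produces $t<1+\frac18+\frac34$.

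\textbf{Case 2.} The identities of part 1 give
$\frac{A_n-A_{n-1}}{A_{n-2}-A_{n-1}}=\frac{g_n-A_n}{A_{n-2}-g_{n-1}}$.
Under the very hypothesis $g_n\ge tA_{n-1}^2+A_{n-1}$ being refuted, this ratio exceeds $\frac{tA_{n-1}^2-3/4}{A_{n-2}}>5$. This uses parts 3 and 4 together with $A_{n-2}<\frac{21}{20}A_{n-1}$. So the asserted bound $<\frac72$ is the conclusion in disguise.

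None of this is surprising. The statement is a Cram\'er-type bound that would imply Oppermann's conjecture, and hence Legendre's (as the paper's final theorem shows). Both are open even under the Riemann Hypothesis. So ``taking the previous theorem as given'' is not a formality; it is the entire unproved content.
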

\begin{proof}
The case for $n \le 21$ can be checked manually. Now assume that $22\le n$.

Since $22 \le n$, by \label{thm1}, we have $g_n < \frac{15}{8}A_{n-1}^2+A_{n-1}=A_{n-1}^2(\frac{15}{8}+\frac{1}{A_{n-1}})$. Therefore, we have
\begin{equation}\label{thm2-eq1}
\begin{aligned}
&A_{n-1}^2(\frac{15}{8}+\frac{1}{A_{n-1}}) < A_{n-1}^2(\frac{15}{8}+\frac{1}{3})=\frac{53}{24}A_{n-1}^2\\
&\implies g_n < \frac{53}{24}\times(\frac{n}{n-1})^2(\log{n}+\log_2{n})\\
&\implies g_n < \frac{53}{24}\times(\frac{22}{21})^2(\log{n}+\log_2{n}).\\
\end{aligned}
\end{equation}
Since $\frac{53}{24}\times(\frac{22}{21})^2 < \frac{5}{2}$ and $n\log{n} < p_n$ for all $1\le n$. \eqref{thm2-eq1} implies that $g_n < \frac{5}{2}\log^2{p_n}$ and this completes the proof.
\end{proof}
\section{Additional Results}
\begin{theorem}\label{thm3-1}If $2 \le a$ is a positive integer, then there's a prime number between $a(a-1)$ and $a^2$, and a prime number between $a^2$ and $a(a+1)$\end{theorem}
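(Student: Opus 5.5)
The plan is to derive the statement directly from the gap bound of Theorem~\ref{thm2}, using the fact that both intervals $(a(a-1),a^2)$ and $(a^2,a(a+1))$ have length exactly $a$. I will not use anything deeper than Theorem~\ref{thm2}, a monotonicity estimate and a finite check. Since the statement is Oppermann's conjecture, which is not known unconditionally, the argument is only as sound as Theorem~\ref{thm2}; all of the real weight rests there, and the reduction below is routine.

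\textbf{Reduction.} Fix $a\ge 2$ and let $N\in\{a(a-1),\,a^2\}$ be the left endpoint of one of the two intervals. Let $p_n$ be the largest prime with $p_n\le N$; such a prime exists since $N\ge 2$. It suffices to show $p_{n+1}<N+a$, because then $N<p_{n+1}<N+a$. The endpoint $N+a$ is $a^2$ or $a(a+1)$, which is composite for $a\ge 2$, so there is no boundary issue. By Theorem~\ref{thm2} and $p_n\le N\le a^2$,
\[
p_{n+1}=p_n+g_n<N+\tfrac{5}{2}\log^2{p_n}\le N+\tfrac{5}{2}\log^2{(a^2)}=N+10\log^2{a}.
\]
So it is enough to have $10\log^2{a}\le a$.

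\textbf{Analytic range.} Let $f(a)=a-10\log^2{a}$. Then $f'(a)=1-\frac{20\log{a}}{a}$, which is positive for $a\ge 90$ (say), so $f$ is increasing there. A direct evaluation gives $f(350)>0$, since $\log{350}\approx 5.858$ and $10\log^2{350}\approx 343$. Hence the statement holds for every $a\ge 350$.

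\textbf{Finite range $2\le a\le 349$.} Here all relevant numbers are below $350\cdot 351<123000$. I would use the known table of maximal prime gaps: below $1.3\times10^5$ every gap is at most $72$ (the record gap $72$ occurs after $31397$). So the argument above with $g_n\le 72$ in place of $10\log^2{a}$ settles every $a\ge 73$. The remaining $a\le 72$ (numbers below $5256$) are handled by direct inspection, listing a prime in each interval. I expect this bookkeeping to be the only laborious step of the reduction itself; the genuine obstacle is entirely inherited from the validity of Theorem~\ref{thm2}.
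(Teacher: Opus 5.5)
Your reduction is the paper's own. Theorem~\ref{thm2} gives $a<g_n<\frac{5}{2}\log^2 p_n\le 10\log^2 a$, which is impossible beyond a threshold (the paper uses $340$, you use $350$; both are numerically correct), and the remaining $a$ are handled by computation. Your appeal to the record-gap table (gap $72$ after $31397$, next record $86$ after $155921$) organises that check more transparently than the paper's ``checked by calculation''. As a derivation of the statement from Theorem~\ref{thm2}, your argument is fine.

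The genuine gap is the one you flag yourself: Theorem~\ref{thm2} is not actually established. So neither your argument nor the paper's proves the statement, which is Oppermann's conjecture and remains open. It needs primes in intervals of length about $\sqrt{x}$ at $x$, while even the Riemann Hypothesis only gives length $O(\sqrt{x}\log x)$. Theorem~\ref{thm2} rests on the bound $g_n<\frac{15}{8}A_{n-1}^2+A_{n-1}$, and the proof of that bound fails in both cases.

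In Case~1, put $\varphi=\frac{A_n-A_{n-1}}{A_{n-1}-A_{n-2}}$. Lemma~\ref{lem1} says $\frac{1}{\varphi}-1<0$, i.e.\ $\varphi>1$ in Case~1. The paper instead invokes it as $\varphi-1<0$ to get $\varphi<1+\frac{1}{A_{n-1}^2-1}$. No upper bound on $\varphi$ is available: a large $g_n$ makes $A_n-A_{n-1}$ large while $A_{n-1}-A_{n-2}$ may be tiny. Independently, the factor $t$ in $1+\frac{t\alpha A_{n-1}}{\beta}$ is lost in the second line of \eqref{thm1-case1-eq2}. Keeping it, the chain ends with $t<t\varphi\frac{A_{n-2}}{A_{n-1}}+\frac{A_n-A_{n-1}}{A_{n-1}^2}$, from which no contradiction in $t$ can be extracted.

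In Case~2, one has $\frac{A_n-A_{n-1}}{A_{n-2}-A_{n-1}}=\frac{g_n-A_n}{A_{n-2}-g_{n-1}}>\frac{g_n-A_n}{A_{n-2}}$. The claimed bound $\frac{7}{2}$ would therefore already force $g_n<A_n+\frac{7}{2}A_{n-2}$, a bound of order $\log p_n$ and far stronger than what is being proved. Its justification also contains a sign slip: the paper writes $A_{n-1}^2+A_{n-2}^2$ where the algebra gives $A_{n-2}^2-A_{n-1}^2$.

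So your proposal establishes only the implication ``Theorem~\ref{thm2} $\Rightarrow$ statement'', and there is no known way to supply the missing input.
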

\begin{proof}
The case for $2\le a\le 339$ can be checked by calculation. Therefore, we shall discuss the case for $340\le a$

Suppose that for some $340 \le a$, there is no prime number between $a(a-1)$ and $a^2$, then we have $p_n < a(a-1)$ and $a^2 < p_{n+1}$.

Since $a = a^2-a(a-1) < p_{n+1}-p_n=g_n$, we have $a < g_n < \frac{5}{2}\log^2{p_n} < \frac{5}{2}\log^2{a^2} < 10\log^2{a}$. But this implies that $a\le 339$, which contradicts with the assumption that $340\le a$

Likewise, suppose that for some $340 \le a$, there's no prime number between $a^2$ and $a(a+1)$, then we have $p_n < a^2$ and $a(a+1) < p_{n+1}$.

Since $a = a(a+1)-a^2 < p_{n+1}-p_n=g_n$, then following the same argument, we have $a\le 339$, which again contradicts the assumption that $340\le a$.

Therefore, for all positive integers $340\le a$, there's at least a prime number between $a(a-1)$ and $a^2$, and at least a prime number between $a^2$ and $a(a+1)$.

Since the conjecture holds for all $2\le a\le 339$ as well by calculation, this implies that there is at least one prime number between $a(a-1)$ and $a^2$, and at least one prime number between $a^2$ and $a(a+1)$ for all positive integers $2\le a$.
\end{proof}
\section*{Acknowledgement}
The author would like to thank Ahmet Furkan Gocgen and Siddid Gosain for their valuable feedbacks on the manuscript.

\normalsize
\bibliographystyle{amsplain}
\bibliography{primegaps}
\end{document}